\newtheorem{theorem}{Theorem}[section]
\newtheorem{lemma}[theorem]{Lemma}
\numberwithin{equation}{section}
\numberwithin{theorem}{section}
\numberwithin{table}{section}
\numberwithin{figure}{section}
\newcommand{\T}{\mathcal T}
\newcommand{\pr}{\mathcal P}
\newcommand{\K}{\mathcal K}
\newcommand{\C}{\mathcal C}
\title[Solving a Generalization of Golomb's Nested Triangular Recursion Using Trees]
{A combinatorial approach for solving certain nested recursions with non-slow solutions}
\author[Isgur]{Abraham Isgur}
\address{Dept. of Mathematics, University of Toronto, CANADA}
\author[Kuznetsov]{Vitaly Kuznetsov}
\address{Dept. of Mathematics, University of Toronto, CANADA}
\author[Tanny]{Stephen M. Tanny}
\address{Dept. of Mathematics, University of Toronto, CANADA}
\begin{document}

\begin{abstract}
We define the generalized Golomb triangular recursion by
$g_{j,s,\lambda}(n) = g_{j,s,\lambda}(n - s - g_{j,s,\lambda}(n-j)) + \lambda j$.
For particular choices of the initial conditions, we show that the solution of the recursion is a non-slow monotone sequence for which we can provide a combinatorial interpretation in terms of a weighted count of the leaves of a certain labeled infinite tree. We discover that more than one such tree interpretation is possible, leading to different choices of the initial conditions and alternative solutions that are closely related. In the case $\lambda=1$ the initial conditions for these alternative tree interpretations coincide and we derive explicit closed forms for the solution sequence and its frequency function.
\end{abstract}

\keywords{nested recursion; slow sequence; Golomb's recursion}

\maketitle

\section{Introduction} \label{sec1}

In this paper all values for the variables and parameters are integers unless otherwise specified.
It is shown in \cite{NonHomog} that for $k \geq 1$, $s \geq 0$, $j \geq 1$, $\nu$ any constant, and with specified initial conditions, the solution to the non-homogeneous Conolly recursion \begin{equation} \label{recursion} R(n) = \sum_{i=1}^kR(n-s-(i-1)j-R(n-ij))+ \nu \end{equation} has a fascinating combinatorial interpretation. Specifically, $R(n)$ counts the number of \emph{labels} that are less than or equal to $n$ in the leaves of a special labeled tree formed by ``grafting" infinitely many copies of a finite, rooted tree $\T$ (related to the parameter $\nu$) onto a labeled $k$-ary tree. The initial conditions for the recursion are specified by the finite tree $\T$ and the parameters $k,s$ and $j$.

Any solution to (\ref{recursion}) that can be characterized combinatorially in this way necessarily has the property that the difference $R(n+1) - R(n)$ between consecutive terms is either 0 or 1. We call such a solution sequence \emph{slowly growing} or, more briefly, \emph{slow} (see \cite{EIRT, Rpaper, NonHomog}).

In general, very little is known about the existence or behavior of non-slow solutions to (\ref{recursion}). The first example of such a solution appears in \cite{HiTan}. In \cite{CCT} it is shown that for $j=1$ the solution to the related homogeneous version of (\ref{recursion}) with $\nu=0$ is non-slow for all odd $k>1$, so long as the initial conditions are taken to be $k+s$ consecutive ones \footnote{A corresponding result is conjectured to hold for even $k$ when $s=0$; see \cite{CCT}, pp. 822-824.}.

To date all of the non-slow solutions to nested recursions of the form (\ref{recursion}) have been derived by applying purely analytical (as opposed to combinatorial) techniques. Further, no non-slow solutions are known to any recursions of this form that have $\nu \neq 0$.

In this paper we address both these deficiencies, thereby taking an initial step toward developing a combinatorial interpretation for non-slow solutions to non-homogeneous nested recursions. More precisely, we extend the tree-based solution methodology described in \cite{NonHomog} to derive a combinatorial interpretation for non-slow solutions to (\ref{recursion}) in the case $k=1$ and $\nu=\lambda j$, where $\lambda>0$ is a fixed parameter, that is, the family of recursions
\begin{align}
g_{j,s,\lambda}(n) = g_{j,s,\lambda}(n - s - g_{j,s,\lambda}(n-j)) + \lambda j
\label{recur}
\end{align}
with appropriately specified initial conditions.

Our focus on solving (\ref{recur}) is motivated by several considerations. First, observe that for $j=\lambda=1$, $s=0$ and initial condition $g_{1,0,1}(1)=1$, (\ref{recur}) is Golomb's non-homogeneous nested triangular recursion \cite{Golomb1990}. Golomb was the first to investigate a non-homogeneous nested recursion of this type; he showed that its solution is the slow sequence 1, 2, 2, 3, 3, 3, 4, 4, 4, 4, 5, 5, 5, 5, 5, \ldots where every positive integer $n$ appears precisely $n$ times. That is, the frequency function $\phi(n)$ of this sequence is given by $\phi(n) = n$. Golomb further observed that this solution also has the neat closed form \begin{equation} g_{1,0,1}(n) = \left\lfloor \frac{ \left\lfloor \sqrt{8n} \right\rfloor + 1}{2}\right\rfloor \label{closed} \end{equation}.

Second, one of us chanced upon the following natural related question: find a closed form for the general term in the sequence 1, 3, 3, 3, 5, 5, 5, 5, 5, \ldots where each odd natural number $2m+1$ appears $2m+1$ times. This non-slow, monotone sequence appears in \cite{sna} as entry A001650, where it is noted that it is generated by the ``Golomb-like" recursion coinciding with (\ref{recur}) with $j=2, s=0$, and $\lambda = 1$ and initial conditions 1, 3, 3. From there we successively generalized the form of the nested recursion along the lines of (\ref{recursion}) to yield (\ref{recur}), while simultaneously developing appropriate analogues for the initial conditions of interest (more details are provided on these in Section \ref{sec2}).

Finally, recursion (\ref{recur}) occurs naturally in the context of our analysis in \cite{NonHomog} for the case $k=1$, where it follows immediately from the formula for $\nu$ (see \cite{NonHomog}, p. 6) that with $k=1$ a tree-based slow solution can only exist if $\nu$ is a positive multiple of $j$ \footnote{In fact, this is true for the even more general tree-grafting methodology described in \cite{NonHomog}, Section 4. We are indebted to Mustazee Rahman for pointing this out.}. Here we solve this recursion with very different initial conditions from those in \cite{NonHomog}, in particular, they are not slow.

The outline of the rest of the paper is as follows. In the next section we show how to extend the tree-based solution methodology in \cite{NonHomog} to derive a non-slow solution to (\ref{recur}) with specified initial conditions. In Section \ref{sec3} we illustrate how more than one such tree interpretation is possible, leading to different choices of the initial conditions and alternative solutions that are closely related. In the case $\lambda=1$ the initial conditions for these alternative tree interpretations coincide, so we conclude in Section \ref{sec4} by deriving explicit closed forms for the solution sequence and its frequency function in this case.

\section{Combinatorial interpretation of the generalized Golomb triangular recursion} \label{sec2}

In this section we give a combinatorial interpretation of the sequence generated by (\ref{recur}); more precisely, we show how this sequence gives a weighted count of the leaves of the labeled infinite tree $\K$ which we construct now. Fix integers $j,\lambda > 0$ and $s \geq 0$, which will correspond to the desired parameters of the recursion. For ease of reference, in what follows we write $g(n)$ in place of $g_{j,s,\lambda}(n)$ where there is no confusion.

Our approach closely resembles that of \cite{NonHomog}. There are two key steps in our construction. First we construct the nodes and edges that form the skeleton of $\K$. Then we insert the consecutive positive integers that are the labels in the nodes of $\K$. To do this, first we must specify the order in which the nodes of $\K$ are to be traversed one at a time; then we insert the appropriate number of labels (either $1$ or $s$) in each node. To help describe this construction, we illustrate our discussion for the case $j=2$, $\lambda=3$ and $s=4$.

\subsection*{Constructing the skeleton of $\K$}
The skeleton of $\K$ consists of an infinite sequence $\K_i, i \geq 0$ of rooted, finite subtrees of $\K$ that we connect together to form $\K$.

We define $\K_0$ to be a chain consisting of two nodes connected by a single edge; we call this a chain of length $2$. We take one of the nodes to be the root of $\K_0$ while the other node at the end of the chain is a leaf. We form subtree $\K_1$ as follows: take a copy of $\K_0$ and attach exactly $\lambda$ chains with $j$ nodes (``length $j$") to the leaf node. Note that $\K_1$ has $\lambda$ leaves. For $i > 1$ we construct the subtree $\K_i$ in an analogous way: take a copy of $\K_{i-1}$ and attach a chain of length $j$ to each of its $\lambda$ leaves. For each $i$, we refer to the root of $\K_i$ as the $i^{th}$ \textbf{supernode} of $\K$. The single node connected to the $i^{th}$ supernode is called $i^{th}$ \textbf{knot node}, while all other nodes of $\K_i$ are \textbf{regular nodes}. Note that in every $\K_i, i \geq 1$ the regular nodes at the end of each of the $\lambda$ chains are leaves.

Next, for all $i \geq 0$ we connect the subtree $\K_i$ to $\K_{i+1}$ by adding an edge from the $i^{th}$ supernode to the $(i+1)^{th}$. To complete the construction of $\K$ we attach an extra regular node as a child of the first supernode in $\K_0$. We call this additional regular node the \textbf{initial leaf of $\K$}. See Figure \ref{fig1} for the construction of $\K$ for the case $j=2$, $\lambda=3$ and $s=4$; note that supernodes are marked by rectangles.

\begin{figure}[htpb]
\begin{center}
\includegraphics[scale=0.35]{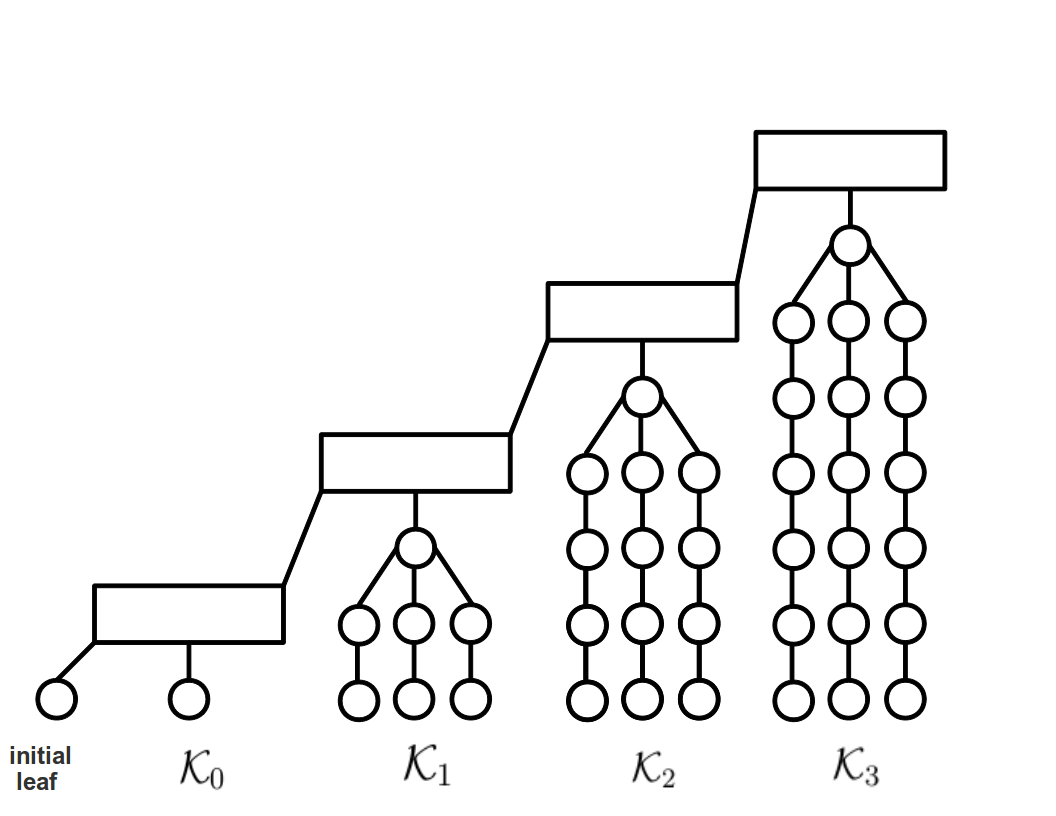}
\caption{The initial portion of the skeleton of $\K$} \label{fig1}
\end{center}
\end{figure}

\subsection*{Labeling $\K$}
We insert one label into each regular node of $\K$ and $s$ labels into each supernode. The labels consist of successive integers starting at 1. Before we can insert these labels we must specify the traversal order of the nodes in $\K$. We recursively define a \textbf{pre-order} traversal as follows: $\K_0$ is traversed by beginning at the initial leaf followed by the supernode itself and then to the remaining leaf. Having traversed $\K_i$ for $i \geq 0$, the subtree $\K_{i+1}$ is traversed next by starting at the $(i+1)^{th}$ supernode, then continuing to the knot node immediately below it, and then traversing the length of each of the $\lambda$ chains from left to right. See Figure \ref{fig2} for the labeling in case $j=2$, $\lambda=3$ and $s=4$.

\begin{figure}[htpb]
\begin{center}
\includegraphics[scale=0.35]{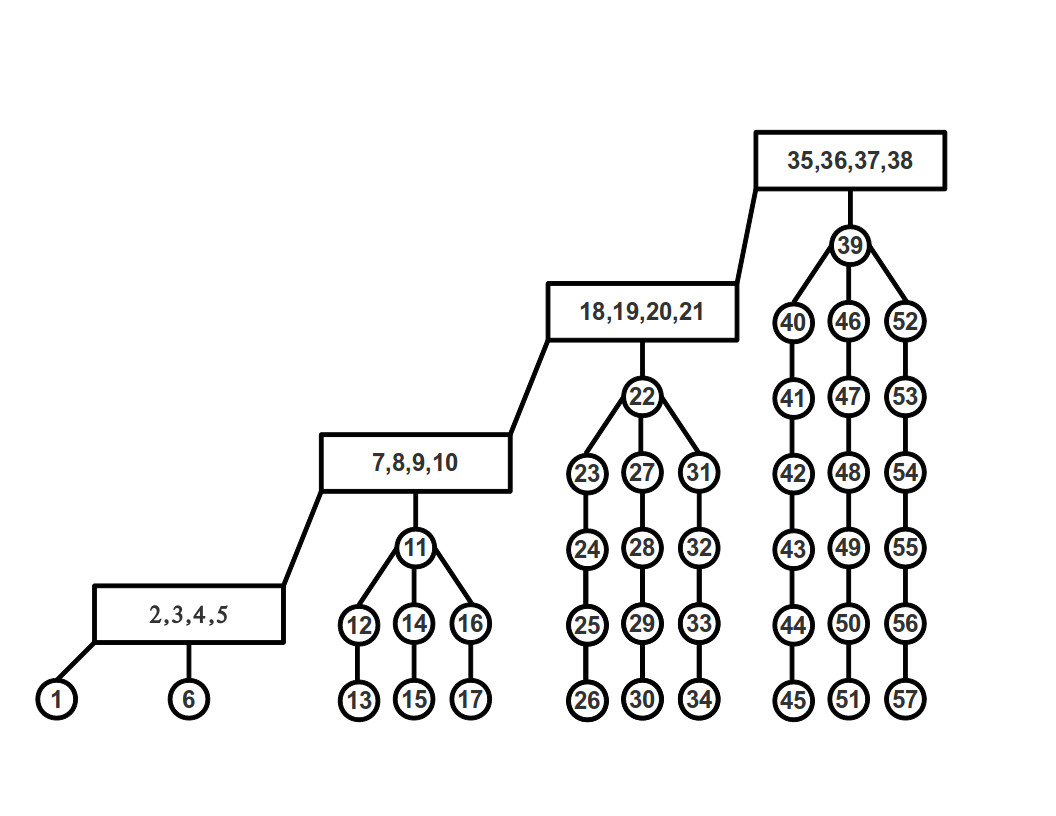}
\caption{Labeling the initial portion of the skeleton of $\K$} \label{fig2}
\end{center}
\end{figure}

Let $\K(n)$ be the subtree of $\K$ consisting of all the nodes of $\K$ with label(s) less than or equal to $n$. Observe that by definition $\K(n)$ necessarily consists of all the labeled nodes and edges in some finite number of subtrees $\K_0, \K_1, \ldots \K_{m-1}$ of $\K$, together with the subtree $\K^*_m$ which consists of a portion (including possibly all) of the subtree $\K_m$. For compactness we write $\K(n) = \K_0, \K_1, \ldots \K_{m-1}, \K^*_m$. If $\K^*_m$ is not all of $\K_m$ then it has fewer than $\lambda$ chains attached to its knot node and/or one of these chains has fewer than $mj$ nodes; in this case we call the subtree $\K^*_m$ \textbf{incomplete}.

We assign a weight of $1$ to the initial leaf of $\K$ while the rest of the leaves receive the weight $j$. Define the \textbf{leaf weight sequence} $w_{j,s,\lambda}(n)$ whose $n^{th}$ member equals the sum of the weights of the leaves of $\K$ that are in $\K(n)$. In other words, $w_{j,s,\lambda}(n)$ is the total weight of the leaves of $\K$ with label less than or equal to $n$; in Figure \ref{fig2} $w_{2,3,4}(n)$ begins 1,1,1,1,1,3,3,3,3,3,3,3,5,5,7,7,9,9,9,9,9,9,9,9,9,11, \ldots. \footnote{Observe that for $j=2, s=0, \lambda=1$ the leaf weight sequence of $\K$ is just the sequence 1,3,3,3,5,5,5,5,5, \ldots discussed in Section \ref{sec1}.} Where there is no confusion we write $w(n)=w_{j,s,\lambda}(n)$.

One needs to be careful to distinguish leaves of $\K$ from leaves of $\K(n)$. In particular, not all leaves of $\K(n)$ are leaves of $\K$. For example, in Figure \ref{fig2}, the node containing the label 52 is a leaf of $\K(52)$ but not of $\K$.

Now we are ready to state our main result on the combinatorial interpretation of the integer sequence generated by $g(n)$.

\begin{theorem} The leaf weight sequence $w(n)$ is the solution generated by the recursion (\ref{recur}) with initial conditions $g(n)= w(n)$ for $1\leq n \leq 3 +2s + \lambda j$. That is, if $g(n)=w(n)$ for $1\leq n \leq 3 +2s + \lambda j$ then $g(n)=w(n)$ for each positive integer $n$.
\label{thm:count}
\end{theorem}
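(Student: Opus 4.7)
The plan is strong induction on $n$. The base cases $1 \le n \le 3 + 2s + \lambda j$ hold by hypothesis, so I would assume $n > 3 + 2s + \lambda j$ and $w(m) = g(m)$ for all $m < n$. By the recursion, it then suffices to establish the purely combinatorial identity
\[
w(n) \;=\; w\bigl(n - s - w(n - j)\bigr) + \lambda j
\]
directly from the structure of $\K$, after first checking that both $n - j$ and $n - s - w(n - j)$ are positive so that the inductive hypothesis can be invoked on each.

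Since $n$ exceeds the last label in $\K_1$, the label $n$ lies in some $\K_m$ with $m \ge 2$. The key structural fact driving the argument is that $\K_m$ is built from $\K_{m-1}$ by grafting a length-$j$ tail onto each of its $\lambda$ leaves, so $\K_m$ contains exactly $\lambda j$ more labels than $\K_{m-1}$ and contributes exactly $\lambda$ new leaves of total weight $\lambda j$ to $\K$ (the previous leaves becoming internal nodes). This motivates defining a parent map $n \mapsto n^\star$ sending every label of $\K_m$ to the label of the structurally analogous node in $\K_{m-1}$: supernode to supernode, knot node to knot node, and position $p$ in a chain of $\K_m$ to position $p$ in the same chain of $\K_{m-1}$ when $p \le (m-1)j$; when $p > (m-1)j$, the node was newly grafted, and I set $n^\star$ to the terminal leaf of the corresponding shorter chain in $\K_{m-1}$. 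The identity to prove then becomes $n^\star = n - s - w(n - j)$ together with $w(n) - w(n^\star) = \lambda j$.

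The proof of this identity proceeds by case analysis on the location of $n$ in $\K_m$: the $m$-th supernode, the $m$-th knot node, or a regular node in the $c$-th chain at position $p$, the last being subdivided according to whether jumping back $j$ positions from $n$ remains in the same chain, crosses into an earlier chain, lands on the knot node, spills into the $m$-th supernode, or even reaches into the last chain of $\K_{m-1}$. In each sub-case one computes $w(n - j)$ by enumerating the leaves of $\K$ with label at most $n - j$, and then verifies the cancellation $s + w(n - j) = n - n^\star$ using the explicit cumulative label counts of the subtrees $\K_0, \K_1, \ldots, \K_{m-1}$. The main obstacle is the bookkeeping for the chain sub-cases: because $w(n-j)$ is piecewise constant with jumps at each leaf label, the sub-cases must be enumerated carefully to confirm that the sum of supernode labels and leaf-weight contributions matches the required offset in every instance. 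The boundary case near $n = 4 + 2s + \lambda j$, where $n^\star$ first enters $\K_1$, also requires a direct check to initiate the induction and to certify positivity of $n - s - w(n-j)$.
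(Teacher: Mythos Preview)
Your inductive framework is exactly what the paper does: the real content is the combinatorial identity $w(n) = w(n - s - w(n-j)) + \lambda j$ for $n > 3 + 2s + \lambda j$, proved directly from the structure of $\K$ (this is the paper's Lemma~2.2), after which the theorem follows by strong induction. Where you diverge is in how you package the identity. The paper defines a global \emph{pruning operation} $\pr$ on the entire subtree $\K(n)$ --- delete $\K_0$ (keeping the initial leaf), strip the $j$ largest labels from every chain in $\K_1,\dots,\K_{m-1}$, and handle $\K_m^*$ by three cases --- and then shows that exactly $s + w(n-j)$ labels were removed, so $\pr\K(n) = \K(n - s - w(n-j))$ after relabeling. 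The leaf-weight drop of $\lambda j$ is then immediate because the leaf of $\K_0$ and the $\lambda - 1$ remaining leaves of $\K_1$ disappear.

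Your alternative, a label-by-label parent map $n \mapsto n^\star$ into $\K_{m-1}$, is a reasonable idea, but the definition you give does not have the claimed properties. Take $j = 2$, $\lambda = 3$, $s = 4$ (the paper's running example). Label $n = 25$ is position $3$ in the first chain of $\K_2$; since $3 > (m-1)j = 2$, your rule sets $n^\star = 13$, the terminal leaf of that chain in $\K_1$. But $w(23) = 9$, so $n - s - w(n-j) = 25 - 4 - 9 = 12 \neq 13$, and correspondingly $w(25) - w(13) = 9 - 5 = 4 \neq \lambda j = 6$, whereas $w(25) - w(12) = 6$. Tracing labels $23,24,25,26$ gives correct targets $10,11,12,13$: the map is closer to ``position $p$ goes to position $p - j$, spilling into the knot node and supernode when $p \le j$'' than to ``position $p$ goes to position $p$''. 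The source of the error is that $n - n^\star$ must absorb the labels removed from \emph{every} earlier subtree $\K_0,\dots,\K_{m-1}$, not just the local shift within one chain; the paper's global pruning handles this automatically because it deletes from all $\K_i$ at once and then simply counts. Your case analysis could in principle be repaired, but you would first have to scrap this definition of $n^\star$ and redo the sub-cases; as written, the central claim $n^\star = n - s - w(n-j)$ is false.
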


The key to proving Theorem \ref{thm:count} lies with the pruning operation $\pr$ on $\K(n) = \K_0$, $\K_1$, $\ldots$, $\K_{m-1}$, $\K^*_m$ which yields a new tree $\pr\K(n)$ defined as follows: first disconnect the initial leaf from $\K_0$ and remove the rest of $\K_0$ (together with its labels) from the tree. Next, for $1 \leq i < m$, delete the $j$ largest labels along with the nodes that contain them from each chain connected to $i^{th}$ knot node in $\K_i$. From the construction of $\K$ it follows that doing so turns each $\K_i$ into $\K_{i-1}$, $1 \leq i < m$.

Now we turn our attention to $\K^*_m$. There are three cases: (1) if $\K^*_m$ has at least $2$ chains connected to its knot node, then delete the $j$ largest labels and the nodes that contain these labels from each chain with at least $j$ labels; (2) if $\K^*_m$ has fewer than 2 chains but has at least $j$ labels in total, then delete the largest $j$ labels and all the nodes that become unlabeled as a result; (3) finally, if $\K^*_m$ has fewer than $j$ labels, then don't do anything. In all cases, note that the pruning operation turns $\K^*_m$ into $\K^*_{m-1}$, some portion of $\K_{m-1}$.


Finally, we reconnect the initial leaf to the supernode of the subtree that started initially (before any pruning took place) as $\K_1$ and relabel the resulting tree in pre-order as defined above. It follows that $\pr \K(n)$ consists of the finite sequence of subtrees of $\K$: $\K_0, \K_1, \ldots \K^*_{m-1}$ and it must be the case that, after relabeling, $\pr \K(n) = \K(d)$ for some $d<n$. See Figure \ref{fig3} where we illustrate this for the pruning of $\K(52)$, which becomes the subtree $\K(31)$.

\begin{figure}[htpb]
\begin{center}
\includegraphics[scale=0.25]{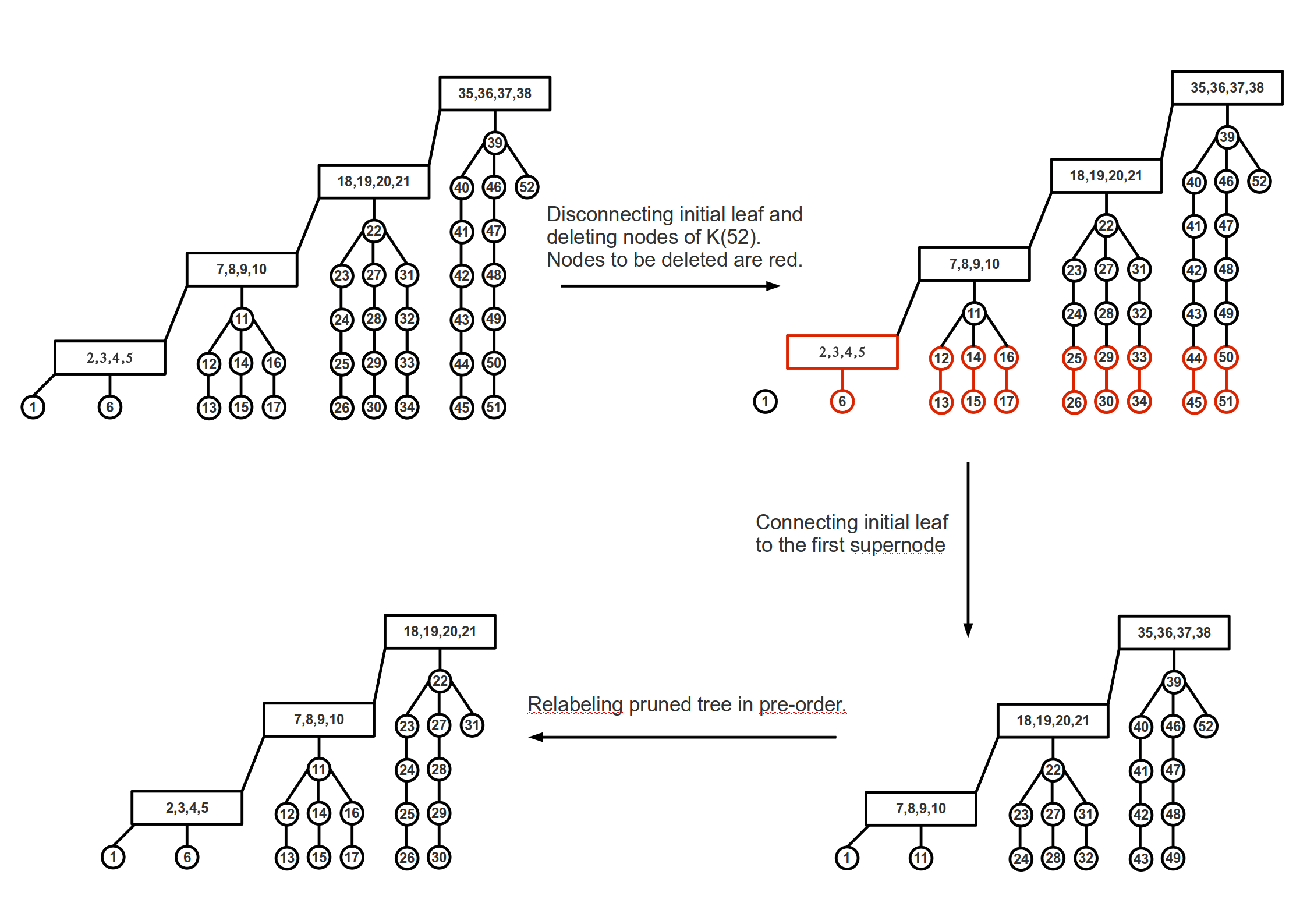}
\caption{Pruning $\K(52)$} \label{fig3}
\end{center}
\end{figure}

We now show the relation between $n$ and $d$ for any pruned subtree, from which Theorem \ref{thm:count} is immediate by a simple induction argument.

\begin{lemma}
For $n > 3+2s+\lambda j$, $\pr \K(n) = \K(n-s-w(n-j))$ and consequently $w(n) = w(n-s-w(n-j)) + \lambda j$.
\label{lemma:prun}
\end{lemma}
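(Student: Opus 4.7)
The plan is to prove two facts whose combination yields the lemma: (i) $\pr\K(n)$, once relabeled in pre-order, coincides with $\K(d)$ for $d = n - s - w(n-j)$; and (ii) $w(n) - w(d) = \lambda j$.

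For (i), the structural claim that $\pr\K(n) = \K_0, \K_1, \dots, \K_{m-2}, \K^*_{m-1}$ after relabeling is built into the pruning rules: each $\K_i$ with $1 \le i < m$ has each of its $\lambda$ chains shortened by $j$ nodes and so becomes $\K_{i-1}$; the three sub-cases for $\K^*_m$ are crafted precisely to turn $\K^*_m$ into $\K^*_{m-1}$; and the reattached initial leaf promotes the former supernode of $\K_1$ to the role of the new first supernode. Hence $d$ equals the number of labels remaining, so writing $P_m(n)$ for the labels deleted from $\K^*_m$, the total deletion count is $(s+1) + (m-1)\lambda j + P_m(n)$, and matching this against $s + w(n-j)$ reduces to
\[
w(n-j) \;=\; 1 + (m-1)\lambda j + P_m(n).
\]
To verify this I would decompose $w(n-j)$ by subtree according to the position of $n-j$. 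In the generic situation---where $n-j$ lies in $\K^*_m$---the hypothesis $n > 3 + 2s + \lambda j$ places the leaf of $\K_0$ and all $\lambda$ leaves of each fully-contained $\K_i$ with $1 \le i \le m-1$ inside $\K(n-j)$, giving $w(n-j) = 1 + j + (m-1)\lambda j + F(n-j)\cdot j$ with $F(n-j)$ the number of leaves of $\K_m$ lying in $\K(n-j)$; a direct position-tracking argument on the pair $(L, c_L)$---the index $L$ of the currently-being-labeled chain in $\K^*_m$ and its label count $c_L$---then confirms $P_m(n) = j(1 + F(n-j))$ in Cases~1 and 2. In the boundary situation (Case~3 pruning), $n - j$ lies instead in $\K_{m-1}$, $P_m(n) = 0$, and exactly the $\lambda$th leaf of $\K_{m-1}$ fails to lie in $\K(n-j)$, which together give $w(n-j) = 1 + (m-1)\lambda j$, restoring the identity.

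For (ii), write $w(n) = 1 + j + (m-1)\lambda j + F_n \cdot j$, where $F_n$ is the number of fully-labeled chains in $\K^*_m$, and analogously $w(d) = 1 + j + (m-2)\lambda j + F_d \cdot j$ for $\K(d)$. Then $w(n) - w(d) = \lambda j + (F_n - F_d) j$, and since pruning sends the fully-labeled chains of $\K^*_m$ (with $mj$ labels each) bijectively to the fully-labeled chains of $\K^*_{m-1}$ (with $(m-1)j$ labels each), we have $F_n = F_d$ and hence $w(n) - w(d) = \lambda j$. The main obstacle is the unified verification of the identity in (i) across the three pruning sub-cases---particularly Case~2, where the deletion can eat through the knot node and into the supernode, and the boundary behaviour of $n-j$ as it crosses the transition between $\K_{m-1}$ and $\K_m$; once (i) is confirmed, the weight identity (ii) follows from the bijection on full chains and a short direct calculation.
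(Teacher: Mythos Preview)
Your proposal is correct and follows essentially the same strategy as the paper: both arguments count the labels removed by pruning and match the total against $s+w(n-j)$ by locating the label $n-j$ among the chains, then deduce the weight identity from the net change in leaves. The only differences are organizational---the paper splits into two cases according to whether the terminal chain $\C_p$ carries at least $j$ labels (rather than by the three pruning rules), and it obtains $w(n)-w(d)=\lambda j$ by directly noting that the leaf of $\K_0$ and $\lambda-1$ leaves of $\K_1$ are lost, which amounts to your bijection $F_n=F_d$.
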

\begin{proof}
Fix $n > 3+2s+\lambda j$. Let $\K(n) =\K_0, \K_1, \ldots \K_{m-1}, \K^*_m$. Since $n> 3+2s+\lambda j$ and $3+2s+\lambda j$ is the total number of labels contained in $\K_0$ and $\K_1$, it follows that $m > 1$.

As we noted above, it must be the case that after relabeling $\pr \K(n) = \K(d)$ for some $d<n$. It suffices to show that $d=n-s-w(n-j)$, that is, we need to show that the total number of labels removed from the tree during the pruning operation is $s+w(n-j)$. By deleting the supernode of $\K_0$ we removed $s$ labels. Thus we must show that the number of labels removed from $\K_1, \ldots, \K_{m-1}, \K^*_m$ plus the one label removed by deleting the leaf of $\K_0$ together sum up to $w(n-j)$.

Note that since $\K_{m-1}$ is complete it has $\lambda$ chains of length $\geq j$ attached to its knot node. Denote the chain with the largest label in $\K_{m-1}$ by $\C_0$. Suppose $\K^*_m$ contains the $m^{th}$ knot node and there are at least 2 chains connected to it. Denote these chains in $\K^*_m$ by $\C_1, \ldots, \C_p$ where $ 2 \leq p \leq \lambda$. Otherwise, as we discussed above, we consider $\K^*_m$ itself as a chain, which we call $\C_1$. In either case, we know that the node with label $n$ lies in the chain $\C_p$ where $1 \leq p \leq \lambda$. We consider two cases.

Case 1: assume that $\C_p$ contains $t<j$ labels. Consider the node with label $n-j>(\lambda-1)j+2s+3>1$ in $\K(n)$. Since $\C_p$ has less than $j$ labels and the chain $\C_{p-1}$ has at least $j$ labels we know that the node with label $n-j$ lies in $\C_{p-1}$. Moreover, it is not a leaf node of $\K$ since a leaf node in this chain has the label $n-t$. Each chain in subtree $\K_i$ where $1 \leq i \leq m-1$ (except the last chain in $\K_{m-1}$) contributes $j$ to $w(n-j)$ which accounts for $j$ labels removed from it during pruning. Similarly, each of the chains $\C_i$ with $0 \leq i < p - 1$ contributes $j$ to $w(n-j)$ which accounts for $j$ labels removed from it. The single leaf of $\K_0$ also contributes $j$ to $w(n-j)$ which accounts for $j$ labels removed when pruning $\C_{p-1}$. Finally, the weight of initial leaf in $w(n-j)$ accounts for the single label removed from the leaf of $\K_0$. Therefore, $d=n-s-w(n-j)$ and $\pr \K(n)$ is same as $\K(n-s-w(n-j))$.

Case 2: assume that $\C_p$ contains $t \geq j$ labels. Again we consider the node with label $n-j$. In this case, this node is either the leaf of $\C_{p-1}$ or a node (but not the leaf) of $\C_p$. However, in both cases, $w(n-j)$ counts the weights of the same leaves. As before each chain in subtree $\K_i$ where $1 \leq i \leq m-1$ (except the last chain in $\K_{m-1}$) contributes $j$ to $w(n-j)$ which accounts for $j$ labels removed from it during pruning. Each of the chains $\C_i$ with $0 \leq i \leq p - 1$ contributes $j$ to $w(n-j)$ which accounts for $j$ labels removed from it. In this case, we have also removed $j$ labels from $\C_p$ while pruning $\K(n)$ and the weight of the single leaf of $\K_0$ contributes $j$ to $w(n-j)$ to account for this. Finally, the weight of initial leaf in $w(n-j)$ accounts for the single label removed from the leaf of $\K_0$ and we have that $d=n-s-w(n-j)$.

The weight of the leaves of $\K$ in $\pr \K(n)$ differs by $\lambda j$ from the weight of the leaves of $\K$ in $\K(n)$ since we have removed the leaf of $\K_0$ and $\lambda-1$ leaves of $\K_1$. This together with the fact that $\pr \K(n) = \K(n-s-w(n-j))$ imply that $w(n) = w(n-s-w(n-j)) + \lambda j$. This completes the proof of the lemma.

\end{proof}

\section{Alternative trees and initial conditions} \label{sec3}

In the preceding section we showed how to solve the recursion (\ref{recur}) in the case where the initial conditions are derived from the tree $\K$ and the leaf weighting scheme described there. It turns out that other trees and weighting schemes lead to a solution of (\ref{recur}) but with different initial conditions that depend upon the tree. For example, consider the initial portion of the tree $\K'$ in Figure \ref{fig4}.

\begin{figure}[htpb]
\begin{center}
\includegraphics[scale=0.35]{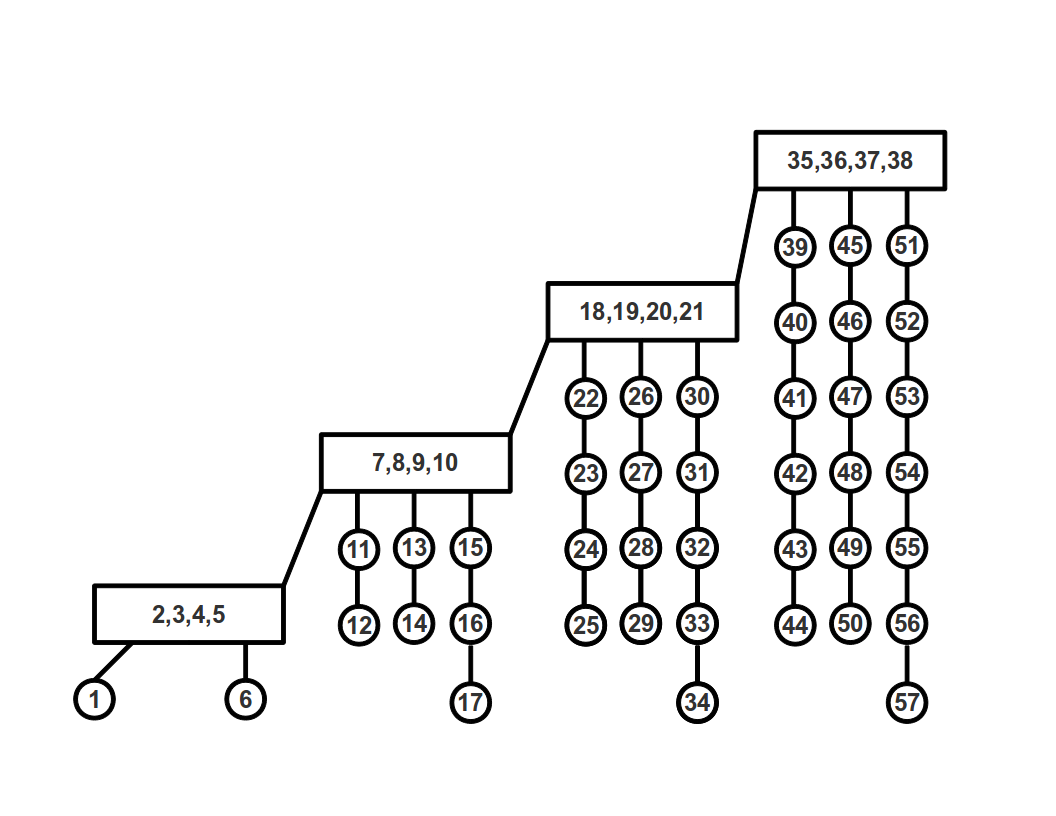}
\caption{Initial portion of an alternative tree $\K'$} \label{fig4}
\end{center}
\end{figure}

It is readily seen that $\K'$ is another tree that corresponds to (\ref{recur}) with $j=2$, $\lambda=3$ and $s=4$. Note that there is no knot node in $\K'$, but rather a ``tail" node in each of the sub-trees that make it up (in Figure \ref{fig4} these are the nodes 17, 34 and 57). Here the leaf weight sequence begins 1,1,1,1,1,3,3,3,3,3,3,5,5,7,7,7,9 \ldots. It is readily seen that this sequence, like the one related to Figure \ref{fig2}, consists of all the odd positive integers in order, but the frequencies with which these integers occur are different between the two sequences. Other tree constructions are also possible.

We can use the obvious generalization of the tree $\K'$ for arbitrary parameters $j, \lambda$ and $s$, together with the same approach discussed in Section \ref{sec2}, to demonstrate the analogue of Theorem \ref{thm:count} holds: if $g(n)=w'(n)$ for all $1\leq n \leq 3+\lambda j + 2s$, then $g(n)= w'(n)$ for all $n$, where $w'(n)$ is the leaf weight sequence for the tree $\K'$.

For the same choice of the parameters it is not hard to show, as we saw in the example above, that the members of the leaf weight sequences related to $\K$ and $\K'$ respectively are the same, although in general they appear with different frequencies. In the special case $\lambda=1$ the trees $\K$ and $\K'$ are the same, so the initial conditions and thus the leaf weight sequences generated by recursion (\ref{recur}) must match. In the next section we derive formulas related to the leaf weight sequence in this case.

\section{Closed-forms when $\lambda = 1$} \label{sec4}

For arbitrary $j>0, s\geq 0$ and $\lambda=1$, the tree $\K$ of Section \ref{sec2} can be described as follows: it consists of an infinite sequence of sub-trees $\K_i, i \geq 0$, where for $i>0$ each $\K_i$ is a chain of length $ij+2$ with root at the $i^{th}$ supernode and with $s+ij+1$ labels. We refer to the initial leaf of $\K$ as zeroth leaf. We call the only other leaf in $\K_0$ the first leaf of $\K$, while for $i>0$ the single leaf of $\K_i$ is the $(i+1)^{th}$ leaf of $\K$. In this section we derive closed forms for the leaf weight sequence sequence and its frequency function that generalize the results in \cite{Golomb1990} for the sequence $g_{1,0,1}(n)$. We begin with the frequency function.

\begin{theorem}
The recursion (\ref{recur}) with parameters $j,s,\lambda=1$ and initial conditions as in Theorem \ref{thm:count} generates a sequence $g(n)=g_{j,s,1}(n)$ with frequency function
\begin{align*}
\phi(n) =
\begin{cases}
n+s, \text{  } n \equiv 1 \bmod{j} \\
0, \text{  otherwise}
\end{cases}
\end{align*}
\label{thm:freq}
\end{theorem}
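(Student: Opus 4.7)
The plan is to combine Theorem \ref{thm:count} with a direct bookkeeping of labels on the tree $\K$. Since that theorem guarantees $g(n) = w(n)$ for all $n$, computing $\phi$ reduces to counting how often each value appears in the leaf weight sequence $w$ of the $\lambda=1$ tree. For $\lambda=1$ the structure of $\K$ is essentially a single spine: an initial leaf attached to the supernode of $\K_0$, followed by the chains $\K_0, \K_1, \K_2, \ldots$ traversed in order, with each $\K_i$ (for $i \geq 1$) a chain of $ij+2$ nodes carrying $s$ labels at its supernode and one label on each other node, for a total of $s + ij + 1$ labels.

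The first step is to locate the leaves of $\K$ along this spine. The only leaves are the initial leaf (weight $1$) and the single leaf of each $\K_i$ (weight $j$), and the preorder traversal places the leaf of $\K_m$ at the label
\[
L_m \;=\; 1 + (m+1)(s+1) + \frac{j\,m(m+1)}{2},
\]
obtained by summing the label contributions of the initial leaf, $\K_0$, and $\K_1, \ldots, \K_m$. Between consecutive leaves the value of $w$ is constant, so $w(n) = 1$ for $n \in [1, L_0 - 1]$ and $w(n) = mj + 1$ for $n \in [L_{m-1}, L_m - 1]$ when $m \geq 1$.

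The second step is the elementary computation
\[
L_m - L_{m-1} \;=\; (s+1) + j \cdot \frac{m(m+1) - (m-1)m}{2} \;=\; mj + s + 1 \;=\; (mj+1) + s,
\]
which is exactly the frequency of the value $mj+1$ in $w$; the case $m=0$ is handled separately by $L_0 - 1 = s+1 = 1+s$. Hence every integer of the form $mj+1$, $m \geq 0$, appears with frequency $(mj+1)+s$, and no integer outside the arithmetic progression $\{mj+1 : m \geq 0\}$ ever appears. Since this progression coincides with $\{n : n \equiv 1 \bmod{j}\}$, we conclude $\phi(n) = n+s$ when $n \equiv 1 \bmod{j}$ and $\phi(n) = 0$ otherwise.

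I do not expect a real obstacle: the argument is a counting exercise on the spine. The only subtlety is confirming that the initial conditions of Theorem \ref{thm:count} are properly inherited, but this is immediate because $L_1 = 3 + 2s + j$, so those initial conditions already force $g = w$ through the end of $\K_1$, and the rest follows from the induction packaged inside Theorem \ref{thm:count}.
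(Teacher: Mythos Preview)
Your proof is correct and follows essentially the same route as the paper: both invoke Theorem~\ref{thm:count} to reduce to the leaf weight sequence $w$, then count the number of labels between consecutive leaves of $\K$ to read off the frequency of each value $mj+1$. The only cosmetic difference is that you write down an explicit closed form for the leaf label $L_m$ and take the difference $L_m-L_{m-1}$, whereas the paper observes directly that the interval between the $m$th and $(m+1)$th leaves contains exactly the $s+mj+1$ labels of $\K_m$; these are the same count.
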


\begin{proof}
By Theorem \ref{thm:count} $g(n)=w(n)$, so it is enough to prove this statement for $w(n)$. Let $d$ be the label of the $m^{th}$ leaf in $\K$. If $m=0$ then $d=1$ and thus $w(d)=1=mj+1$. If $m\geq 1$, then $w(d)$ is the sum of the weights of the leaves of $\K$ in $\K_0, \ldots, \K_{m-1}$. Since each of these chains has a single leaf of weight $j$ and $\K_0$ also has an initial leaf with weight $1$, we have $w(d)=mj+1$.

If $t$ is the label of the $(m+1)^{th}$ leaf, then $w(n)=mj+1$ for $1<d\leq n < t$. By the construction of $\K$ there are exactly $s+mj+1$ labels $n$ such that $d\leq n < t$ (one leaf label $d$ and $s+mj$ labels in $\K_m$). Therefore, if $q \equiv 1 \bmod{j}$ then $q$ is repeated at least $q+s$ times in the sequence $w(n)$. But the sequence $w(n)$ is non-decreasing and $w(d-1)=(m-1)j+1$ (if $d>1$) and $w(t)=mj+j+1$. Thus we conclude that if $q \equiv 1 \bmod{j}$ then $q$ is repeated exactly $q+s$ times. This argument also shows that $w(n)\equiv 1 \bmod{j}$ for all $n>0$ and if $q \not\equiv 1 \bmod{j}$ then $\phi(q)=0$ which completes the proof.
\end{proof}

Theorem \ref{thm:freq} shows that for $j>0$, $s \geq 0$ and $\lambda=1$ recursion (\ref{recur}) generates the sequence $1^{s+1}, (j+1)^{s+j+1}, (2j+1)^{s+2j+1}, \ldots$, where $(mj+1)^{s+mj+1}$ means that the value $mj+1$ is repeated $s+mj+1$ times. For $j=1$ and $s=0$ this is the sequence is 1,2,2,3,3,3, \ldots with frequency function $\phi(n)=n$ discussed in \cite{Golomb1990}.

Next we generalize (\ref{closed}) by deriving a closed form solution for $g_{j,s,1}(n)$.

\begin{theorem}
Let $p_m = 1+\sum\limits_{i=0}^m(s+ij+1)$ and $F(n) = max\{p_m : p_m \leq n, \text{  } 0 \leq m \} \cup \{1\}$. Then for all positive integers $n$
\begin{align*}
g_{j,s,1}(n) = \frac{(j-2s) + \sqrt{(2s-j)^2 + 4(2jF(n) + 2s + 1 - 3j)}}{2}.
\end{align*}
\label{thm:closed}
\end{theorem}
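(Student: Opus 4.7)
The plan is to reduce the closed form to a direct algebraic inversion of the block data supplied by Theorem \ref{thm:freq}. By Theorems \ref{thm:count} and \ref{thm:freq}, the sequence $g_{j,s,1}$ is piecewise constant and takes the value $kj+1$ on the block $[P_k,\,P_{k+1}-1]$ of indices, where $P_0=1$ and
\[
P_k \;=\; 1 + \sum_{i=0}^{k-1}(s+ij+1) \;=\; 1 + k(s+1) + \tfrac{jk(k-1)}{2} \qquad (k \geq 1).
\]

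Next I would unpack the definition of $F(n)$ to show that $F(n)=P_k$ on this block. Since $p_m = P_{m+1}$ for every $m\geq 0$, when $k \geq 1$ we have $P_k = p_{k-1} \leq n < p_k = P_{k+1}$, so the maximum in the defining set is attained at $p_{k-1}=P_k$; when $k=0$, the set $\{p_m : p_m \leq n\}$ is empty and the term $\cup\{1\}$ forces $F(n)=1=P_0$. In every case $F(n)=P_k$ while $g(n)=kj+1$.

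Finally, I would verify the closed form by a single algebraic identity. Substituting $F(n)=P_k$ and expanding, both the discriminant-type expression $(2s-j)^2 + 4\bigl(2jP_k + 2s + 1 - 3j\bigr)$ and the target square $\bigl(2(kj+1) - (j-2s)\bigr)^2 = (2kj + 2 + 2s - j)^2$ simplify to the common polynomial
\[
4k^{2}j^{2} - 4kj^{2} + j^{2} + 8kjs + 8kj - 4js - 4j + 4s^{2} + 8s + 4.
\]
Hence the radical equals $|2kj+2+2s-j|$; this quantity is automatically non-negative for $k\geq 1$ (since $(2k-1)j\geq j>0$ and $2s+2\geq 2$), so the formula collapses to $\bigl((j-2s) + (2kj+2+2s-j)\bigr)/2 = kj+1$, as required.

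The main obstacle is bookkeeping in the polynomial expansion in the last step; no new conceptual input is needed once the identification $F(n)=P_k$ is in hand. A minor care point is confirming the sign of the square-root argument in the small $k=0$ block, where the verification reduces to the harmless condition $j \leq 2s+2$.
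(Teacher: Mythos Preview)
Your approach is essentially the paper's: both reduce to the block structure of Theorem~\ref{thm:freq}, identify $F(n)$ with the most recent leaf label (your $P_k=p_{k-1}$), verify the formula at these values via the same quadratic relation, and extend by constancy of $g$ on each block.

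The one substantive issue is your last sentence. The condition $j\le 2s+2$ that you isolate for the $k=0$ block is \emph{not} harmless: it fails whenever $j\ge 3$ and $s$ is small enough, e.g.\ $j=3$, $s=0$. There $F(1)=1$, and the right-hand side evaluates to
\[
\frac{3+\sqrt{(0-3)^2+4(6+0+1-9)}}{2}=\frac{3+\sqrt{1}}{2}=2,
\]
whereas $g_{3,0,1}(1)=1$. More generally, on the block $[1,s+1]$ the radical equals $|2s+2-j|$, and when $j>2s+2$ the formula returns $j-2s-1\neq 1$. The paper's proof simply asserts the $n=1$ case ``simplifies to $1$'' without checking and so shares this oversight; in fact the stated identity is false on $[1,s+1]$ for such parameters. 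You have actually detected a gap in the theorem as stated---rather than calling it harmless, you should either add the hypothesis $j\le 2s+2$ or restrict the conclusion to $n\ge p_0=s+2$, where your verification (and the paper's) goes through unconditionally.
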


\begin{proof}
To begin, observe that $F(n)$ is the label of the last leaf of $\K(n)$. We prove the formula in two steps. First we show that it holds when $n$ is a leaf label. Then we extend this formula to apply to all other natural numbers $n$.

When $n=1$ it is the label of the $0^{th}$ leaf. But then $F(1) = 1$ so the right hand side of the formula in the statement of the theorem simplifies to $1$ as required.

Next assume that $n$ is the label of the $(m+1)^{th}$ leaf in $\K$, so is the leaf label of subtree $\K_m$. Each of $\K_0, \ldots, \K_m$ contain $s+ij+1$, $0\leq i \leq m$ labels, while there is one extra label in the initial leaf. This implies that $(m+1)^{th}$ leaf has label $n=1+\sum_{i=0}^m (s+ij+1) = p_m$. But from the proof of Theorem \ref{thm:freq} we have that $g(n) = (m+1)j+1$ if $n$ is the label of the $(m+1)^{st}$ leaf in $\K$. We combine these two observations to derive a closed formula for $g(n)$ for such $n$.

Expanding the sum for $n$ we get $n = 1+(m+1)s + \frac{j(m+1)m}{2} + (m+1)$. Multiplying both sides by $j$ and rewriting the result suggestively yields $jn = j(m+1)s + s - s + \frac{(j(m+1)+1-1)(jm+j+1-j-1)}{2} + j(m+1) + 1+j-1$, which is just $jn = sg(n) - s + \frac{(g(n)-1)(g(n)-j-1)}{2} + g(n)+j-1$. Therefore, we have the quadratic equation $g^2(n) + (2s-j)g(n) + (3j-2s-1-2jn) = 0$, which we solve for $g(n)$ in terms of $n$. To do so we need the discriminant of the quadratic $D = (2s-j)^2 + 4(j(2n-3)+2s+1) \geq 0$. But this inequality holds for $n \geq 2$ since all the parameters are non-negative integers. The positive root of the quadratic equation is

\begin{align*}
g(n) = \frac{(j-2s) + \sqrt{(2s-j)^2 + 4(2jn + 2s + 1 - 3j)}}{2}
\end{align*}
which satisfies conclusion of the theorem since $n = p_m = F(n)$ in this case.

Finally we extend this result to all the other nodes in the tree. Between any two successive leaf nodes we know that $g(n)$ is constant and equals the value at the leaf with the largest label which is still smaller than $n$. But as we observed at the outset, this leaf node has label $F(n)$, so $g(n)=g(F(n))$. But $F(F(n))=F(n)$, which completes the proof.

\end{proof}

We conclude by deriving a simple closed form solution for the recursion $g_{1,s,1}(n)$. Of course we could do so by inserting $j=1$ in the preceding formula. What follows is an alternate approach which makes use of the known result (\ref{closed}) for $j=1$ and $s=0$. First we show that we need only examine the case $s<j$.

\begin{theorem}
For $s \geq j$ and $\lambda=1$, if $s=qj+r$ with $0\leq r<j$ and $\alpha=\sum\limits_{i=0}^{q-1}(r+ij+1)$, then $g_{j,s,1}(n) = g_{j,r,1}(n+\alpha) -qj$.
\label{thm:reduce}
\end{theorem}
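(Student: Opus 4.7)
The plan is to prove the identity by comparing the frequency structures of both sides, using Theorem \ref{thm:freq} twice. Define $h(n) := g_{j,r,1}(n+\alpha) - qj$; the goal is to show $h(n) = g_{j,s,1}(n)$ for every positive integer $n$. Since both sequences are non-decreasing integer sequences whose values jump by $j$ in blocks, it suffices to show that they take the same values with the same multiplicities, starting from $n=1$.

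First I would apply Theorem \ref{thm:freq} to $g_{j,r,1}$: it takes the value $mj+1$ exactly $r+mj+1$ times for each $m \geq 0$, and the block of value $mj+1$ begins at position $1 + \sum_{i=0}^{m-1}(r+ij+1)$. The key observation is that $\alpha = \sum_{i=0}^{q-1}(r+ij+1)$ is precisely the total length of the initial blocks corresponding to values $1, j+1, \ldots, (q-1)j+1$. Therefore $g_{j,r,1}(1+\alpha) = qj+1$, so $h(1) = qj + 1 - qj = 1$. More generally, for $n \geq 1$ one has $g_{j,r,1}(n+\alpha) = (q+m)j+1$ where $m$ is determined by the position $n+\alpha$, and hence $h(n) = mj + 1$, giving a non-negative value of the correct form.

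For frequencies: the value $(q+m)j+1$ appears $r + (q+m)j + 1 = (qj+r) + mj + 1 = s + mj + 1$ times in $g_{j,r,1}$, so after shifting and subtracting $qj$ we see that $h$ takes the value $mj+1$ exactly $s+mj+1$ times, for each $m \geq 0$. Applying Theorem \ref{thm:freq} the other way (with parameter $s$) shows that $g_{j,s,1}$ has exactly this frequency function. Since both sequences are non-decreasing, take the same values $1, j+1, 2j+1, \ldots$ in the same order, each with the same multiplicity, they agree termwise.

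I do not foresee a serious obstacle; the proof is essentially a bookkeeping check that the telescoping sum defining $\alpha$ lines up with the block structure described by Theorem \ref{thm:freq}. The one subtlety to verify carefully is that $\alpha$ counts \emph{exactly} the terms of $g_{j,r,1}$ whose value is strictly less than $qj+1$, so that position $1+\alpha$ is the first occurrence of $qj+1$; this is immediate from the formula but worth stating explicitly. An alternative route would be to define $h$ as above, verify directly that $h$ satisfies the recursion (\ref{recur}) with parameters $(j,s,1)$ by a substitution using $s=qj+r$ and the recursion for $g_{j,r,1}$ at argument $n+\alpha$, and then check the initial conditions $h(n) = g_{j,s,1}(n)$ for $1 \leq n \leq 3+2s+j$ before invoking Theorem \ref{thm:count}; but the frequency-matching argument is shorter and avoids checking the initial block by hand.
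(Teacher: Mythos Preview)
Your argument is correct. Both your approach and the paper's rest on the same underlying observation---that the block of value $(q+m)j+1$ in $g_{j,r,1}$ has length $r+(q+m)j+1 = s+mj+1$, matching the block of value $mj+1$ in $g_{j,s,1}$---but you reach it differently. The paper argues directly at the level of the labeled trees: it compares $\K_{j,s}(n)$ with $\K_{j,r}(n+\alpha)$, notes that the first $q$ chains of $\K_{j,r}$ carry exactly $\alpha$ labels (past the initial leaf), and that the chain $\K'_{q+i}$ has $r+(q+i)j+1 = s+ij+1$ labels, so the remaining chains line up with those of $\K_{j,s}$; hence $\K_{j,r}(n+\alpha)$ has exactly $q$ extra leaves of weight $j$. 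You instead invoke Theorem~\ref{thm:freq} as a black box and match the frequency functions of the two non-decreasing sequences. Your route is shorter and more modular once Theorem~\ref{thm:freq} is in hand; the paper's tree comparison is more self-contained and makes the bijective reason for the shift visible. The one point worth making fully explicit in your write-up is that two non-decreasing sequences on the positive integers with identical frequency functions (and both starting at value $1$ at $n=1$) necessarily coincide termwise; you state this but it deserves a sentence of justification.
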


\begin{proof}
In this proof we will denote the infinite labeled tree $\K$ constructed using parameters $j,s$ and $\lambda=1$ by $\K_{j,s}$. Similarly, we denote the infinite labeled tree $\K$ constructed using parameters $j,r$ and $\lambda=1$ by $\K_{j,r}$. We know that $\K_{j,s}(n)=\K_0, \ldots, \K^*_m$ where $\K_i$, $0\leq i < m$ is a chain having $s+ij+1$ labels with one extra label in the initial leaf of $\K_0$. Since $\K^*_m$ is possible incomplete it contains $t \leq s+mj+1$ labels. Similarly, $\K_{j,r}(n+\alpha) = \K'_0, \ldots, \K'^*_p$. Since each such chain $\K'_i$ has exactly $r+ij+1$ labels (with one extra label on initial leaf of $\K_0$) the first $q$ of these chains have $1+\alpha=1+\sum_{i=0}^{q-1}(r+ij+1)$. The remaining chains $\K'_q, \ldots, \K'^*_p$ need to have $n-1$ labels. Observe that $\K'_{q+i}$, $0 \leq i$ has exactly $r+qj+ij+1=s+ij+1$ labels. If we let  $\K'^*_{q+m}$ be a portion of $\K'_{q+m}$ with $t$ labels on it then $\K'_q, \ldots, \K'^*_{q+m}$ has the same number of labels as $\K_0, \ldots, \K^*_m$ not counting initial leaf of $\K_0$. That is, $\K'_q, \ldots, \K'^*_{q+m}$ has the same number of labels as $\K'_q, \ldots, \K'^*_p$ which shows that $p=q+m$ and $\K'^*_p$ has $t$ labels. This implies that $\K_{j,r}(n+\alpha)$ has $q$ more leaves with weight $j$ than $\K_{j,s}(n)$ and the proof is complete.

\end{proof}

Theorem \ref{thm:reduce} enables us to find a simple closed form solution for the recursion $g_{1,s,1}(n)$. In this case $r=0$ and $\alpha=\sum_{i=1}^s i = \binom{s+1}{2}$. Therefore, $g_{1,s,1}(n) = g_{1,0,1}(n+\binom{s+1}{2}) - s$. Using the closed form solution for $g_{1,0,1}(n)$ given by (\ref{closed}), we write

\begin{align*}
g_{1,s,1}(n) = \left\lfloor \frac{ \left\lfloor \sqrt{8(n+\binom{s+1}{2})} \right\rfloor + 1}{2}\right\rfloor - s.
\end{align*}

\section{Concluding remarks} \label{sec5}

The results of this paper provide the first known example of a combinatorial interpretation for a non-slow solution to a nested recursion, through the use of the new technique of weighted leaf counting. This suggests that other nested recursions with non-slow solutions that occur as special cases of recursion (\ref{recursion}) might also have combinatorial interpretations of a similar sort. This seems to be particularly the case for non-slow monotone solutions, as was the case for the Golomb recursion and its generalization. In subsequent work we plan to investigate several such recursion families.

\end{document}